\numberwithin{procedure}{section}
\newcommand{\erase}[1]{}
\newtheorem{theorem}{Theorem}[section]
\newtheorem{proposition}[theorem]{Proposition}
\newtheorem{corollary}[theorem]{Corollary}
\newtheorem{_algorithm}[theorem]{Algorithm}
\newtheorem{_definition}[theorem]{Definition}
\newenvironment{definition}{\begin{_definition}\rm}{\end{_definition}}
\newtheorem{_propositiondefinition}[theorem]{Proposition-Definition}
\newtheorem{_remark}[theorem]{\it Remark}
\newenvironment{remark}{\begin{_remark}\rm}{\end{_remark}}
\newtheorem{_example}[theorem]{Example}
\newtheorem{_assumption}[theorem]{Assumption}
\newtheorem{_construction}[theorem]{Construction}
\newtheorem{_claim}[theorem]{Claim}
\newtheorem{_conjecture}[theorem]{Conjecture}
\numberwithin{equation}{section}
\numberwithin{table}{section}
\numberwithin{figure}{section}
\renewcommand{\qed}{\hfill {$\Box$}}
\newcommand{\F}{\mathord{\mathbb F}}
\newcommand{\Q}{\mathord{\mathbb  Q}}
\newcommand{\Z}{\mathord{\mathbb Z}}
\newcommand{\AAA}{\mathord{\mathcal A}}
\newcommand{\DDD}{\mathord{\mathcal D}}
\newcommand{\VVV}{\mathord{\mathcal V}}
\newcommand{\WWW}{\mathord{\mathcal W}}
\newcommand{\SSSS}{\mathord{\mathfrak S}}
\newcommand{\mapdownsurj}{
\hbox{$\bigm\downarrow$}
\llap{\hbox{\raise 2pt\hbox{$\bigm\downarrow$}}}%
\vstrechmapdown
}
\newcommand{\mapupsurj}{
\hbox{$\bigm\uparrow$}
\llap{\hbox{\raise 2pt\hbox{$\bigm\uparrow$}}}%
\vstrechmapup
}
\newcommand{\inj}{\hookrightarrow}
\newcommand{\set}[2]{\{\,{#1}\mid {#2} \,\}}
\newcommand{\gen}[1]{\langle {#1}  \rangle}
\newcommand{\tensor}{\otimes}
\newcommand{\sprime}{\sp\prime}
\newcommand{\spar}[1]{\sp{(#1)}}
\newcommand{\spprime}{\sp{\prime\prime}}
\newcommand{\sperp}{\sp{\perp}}
\newcommand{\semidirectproduct}{\rtimes}
\newcommand{\inv}{\sp{-1}}
\newcommand{\Hom}{\mathord{\mathrm{Hom}}}
\newcommand{\OG}{\mathord{\mathrm{O}}}
\newcommand{\Ker}{\operatorname{\mathrm{Ker}}\nolimits}
\newcommand{\Stab}{\operatorname{\rm Stab}}
\newcommand{\intfE}[1]{\langle #1\rangle_{E}}
\newcommand{\intfQ}[1]{\langle #1\rangle_{Q}}
\newcommand{\intfL}[1]{\langle #1\rangle_{L}}
\newcommand{\intfU}[1]{\langle #1\rangle_{U}}
\newcommand{\intfS}[1]{\langle #1\rangle_{S}}
\newcommand{\intfUT}[1]{\langle #1\rangle_{U_T}}
\newcommand{\baru}{\bar{u}}
\newcommand{\barx}{\bar{x}}
\newcommand{\bary}{\bar{y}}
\newcommand{\barz}{\bar{z}}
\newcommand{\barM}{\overline{M}}
\newcommand{\ptwo}{\mathrm{p}_2}
\newcommand{\psix}{\mathrm{p}_6}
\newcommand{\typeI}{\mathrm{I}}
\newcommand{\typeII}{\mathrm{II}}
\newcommand{\Min}{\operatorname{\mathrm{Min}}}
\newcommand{\tthh}{^{\mathrm{th}}}
\newcommand{\sstt}{^{\mathrm{st}}}
\newcommand{\onethird}{\frac{\,1\,}{3}}
\DeclarePairedDelimiter\floor{\lfloor}{\rfloor}
\begin{document}

\title[Quebbemann's extremal lattices]
{A note on Quebbemann's extremal \\ lattices of rank $64$}

\author[I. Shimada]{Ichiro Shimada}
\address{Department of Mathematics, Graduate School of Science, Hiroshima University,
1-3-1 Kagamiyama, Higashi-Hiroshima, 739-8526 JAPAN}
\email{ichiro-shimada@hiroshima-u.ac.jp}

\begin{abstract}
By constructing explicit examples,
we show that the  method of Quebbemann yields
many isomorphism classes  of 
extremal lattices of rank $64$.
Many of these examples  have no non-trivial automorphisms.
\end{abstract}
\keywords{extremal lattice}

\subjclass[2010]{11H31, 11H56}
\thanks{This work was supported by JSPS KAKENHI Grant Number 16H03926 and~20H01798.}



\maketitle
%
%
\section{Introduction}\label{sec:Introduction}
By a \emph{lattice}, we mean an integral positive-definite lattice.
Let $L$ be an even unimodular lattice with the  bilinear form $\intfL{\, ,\,}\colon L\times L\to \Z$.
We put
\[
\min(L):=\min \set{\intfL{x, x}}{x\in L,\; x\ne 0}.
\]
It is well-known that the rank $n$ of $L$ is divisible by $8$, and that
$\min(L)$ satisfies
\begin{equation}\label{eq:min}
\min (L) \le 2+2 \floor*{\frac{n}{24}}.
\end{equation}
We say that an even unimodular lattice $L$ of rank $n$ is \emph{extremal}
if the equality holds in~\eqref{eq:min}.
Extremal lattices are an important research subject,
because they give rise to sphere packings of high density.
\par
Not so many \emph{explicit} examples of extremal lattices are known.
Moreover, since the construction of these examples involves very special algebraic objects, 
each of the known examples has a large automorphism group.  
For example, 
the automorphism group $\OG(\Lambda)$ of the Leech lattice $\Lambda$ 
is of order $2^{22} \cdot 3^9 \cdot 5^4 \cdot 7^2 \cdot 11\cdot 13 \cdot 23$.
\par
On the other hand, 
it was shown in ~\cite{Peters1983} 
that the number of isomorphism classes of extremal lattices of rank $40$ is $>8.45 \times 10^{51}$.
Since this bound was proved by means of a mass formula,
we do not obtain 
explicit examples  of extremal lattices of rank $40$
from this result.
\par
We consider extremal lattices of rank $64$.
Quebbemann~\cite{Q84} gave a method 
to construct extremal lattices $Q$ of rank $64$ 
from certain ternary codes $B$.
We call an extremal lattice of rank $64$ a \emph{Quebbemann lattice}
if it is obtained by (a generalization of) this method.
See Section~\ref{sec:Q} for the precise definition.
A remarkable property of Quebbemann's construction is 
that the condition on the ternary code $B$ required 
in order for the lattice $Q$ to be extremal is an \emph{open} condition.
Therefore we expect that, 
by generating sufficiently general ternary codes $B$,
we obtain many extremal lattices of rank $64$.
Another nice feature of  this construction is that 
we can calculate the set $\Min(Q)$ of non-zero minimal-norm vectors of a Quebbemann lattice $Q$ 
(that is, the set of vectors $v$ with $\intfQ{v, v}=6$).
It turns out that, 
by means of  $\Min(Q)$,  it is possible to compute the automorphism group of $Q$, 
and to compare the isomorphism class of $Q$ with  isomorphism classes of other Quebbemann lattices.
\par 
The purpose of this note is 
to generalize Quebbemann's construction slightly, and 
to show that this method indeed yields
 many mutually non-isomorphic extremal lattices  of rank $64$,
 by choosing the code $B$ (pseudo-)randomly, 
 and that their automorphism groups are often 
very  small.
Our main result below is proved by producing Quebbemann lattices $Q$ 
explicitly.
%
\begin{theorem}\label{thm:main}
Quebbemann's method yields
\begin{enumerate}[{\rm (1)}]
\item 
 at least $300$ isomorphism classes of  extremal lattices $Q$ of rank $64$ such that 
$\OG(Q)= \{\pm 1\}$, and 
\item 
at least $100$ isomorphism classes of extremal lattices $Q$ of rank $64$  such that  
$\OG(Q)\cong \{\pm 1\}\times \Z/8\Z$.
\end{enumerate}
\end{theorem}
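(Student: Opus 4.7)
The plan is to prove Theorem \ref{thm:main} by explicit computer-assisted construction. I would sample ternary codes $B$ pseudo-randomly from a suitable parameter space, apply the (generalized) Quebbemann construction described in Section \ref{sec:Q} to obtain a lattice $Q=Q(B)$, and then certify three things for each output: extremality of $Q$, the isomorphism type of $\OG(Q)$, and pairwise non-isomorphism with all previously produced lattices. Because the extremality condition on $B$ is open, a random $B$ is likely to satisfy it, so this step needs no cleverness beyond careful sampling; extremality is verified directly by checking $\min(Q)=6$ from the explicit description of $\Min(Q)$ provided by the construction.

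For the automorphism group, since $\Min(Q)$ is computable combinatorially from $B$, the group $\OG(Q)$ embeds in the symmetric group on the set of pairs $\{\pm v\}$ with $v\in \Min(Q)$, and can be recovered as the stabilizer of the induced inner-product table (or of a distilled graph invariant built from it). For part (1) I expect a generic $B$ to yield $\OG(Q)=\{\pm 1\}$, and I would simply collect $300$ such examples. For part (2) I would impose a deliberate order-$8$ symmetry at the level of $B$---using the natural $\Z/8\Z$-action arising in Quebbemann's original setup---and restrict the sampling to codes invariant under it, so that $Q$ inherits a $\{\pm 1\}\times\Z/8\Z$ action; one then checks, for each $B$ recorded, that no further symmetry appears in $\OG(Q)$.

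To separate isomorphism classes I would precompute a cheap canonical invariant of each $Q$ from $\Min(Q)$---for example the sorted multiset of inner-product distributions around a minimal vector, or the canonical form of an auxiliary graph on $\Min(Q)/\{\pm 1\}$---and only resort to a direct isomorphism test when two candidates agree on this invariant. The main obstacle will be the sheer size of $\Min(Q)$ for an extremal rank-$64$ lattice, together with the fact that several hundred candidates must be processed. Feasibility rests on the fact that Quebbemann's construction expresses $\Min(Q)$ combinatorially in terms of the codewords of $B$, so one never has to perform naive short-vector enumeration inside a rank-$64$ lattice; all heavy computation is carried out at the level of the ternary code $B$.
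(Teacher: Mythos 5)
Your proposal follows essentially the same route as the paper: pseudo-randomly chosen codes $B$ satisfying the open $\ptwo$-condition (with a $\tilde{\gamma}$-invariant orbit code $B(\gamma,v)$ for the order-$8$ case), enumeration of $\Min(Q)$ combinatorially from the codewords, the distribution of intersection patterns of minimal-norm vectors as the invariant separating isomorphism classes, and determination of $\OG(Q)$ from the combinatorics of $\Min(Q)$. The only implementation-level difference is that the paper certifies $\OG(Q)=\Gamma$ by exhibiting a $\Gamma$-rigidifying basis of $64$ minimal vectors, each pinned down by its intersection pattern and its inner products with its predecessors, rather than by a generic stabilizer computation on the full set of $1305600$ vector pairs.
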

See~Section~\ref{sec:Q} for the method to produce Quebbemann lattices,
Section~\ref{subsec:enumeration} for a method to  enumerate  minimal-norm vectors,
Section~\ref{subsec:isomclasses} for a method to distinguish isomorphism classes,
and Section~\ref{subsec:aut}  for the computation of the automorphism groups.
We exhibit a few examples in detail  in Section~\ref{sec:examples}.
The computation data of a part of the lattices in Theorem~\ref{thm:main}
is available from the author's web-page~\cite{ShimadaQ64compdata}.
(The whole data is too large to be put on a website.)
These data are written in the {\tt Record} format of  {\tt GAP}~\cite{GAP}.
\par 
In Chapter 8.3 (d) of~\cite{CSbook},
Conway and Sloane constructed 
a Quebbemann lattice that is different from the one given in
Quebbemann's original paper~\cite{Q84},
and suggested that there exist several isomorphism classes of Quebbemann lattices.
In~\cite{Q84Siegel}, Quebbemann showed by means of a mass formula
that there exist at least two isomorphism classes.
The ease with which we can make  non-isomorphic Quebbemann lattices
suggests that the number of isomorphism classes is very huge.
\par
Unimodular lattices with no non-trivial automorphisms
have been studied by many authors since the work of Bannai~\cite{Bannai1990}.
In~\cite{Mimura2006}, an even unimodular lattice of rank $64$ 
without non-trivial automorphisms 
is constructed.
This lattice is, however, not extremal.
\par
In~\cite{Nebe1998}, 
Nebe constructed
an extremal lattice of rank $64$ by a different method.
The order of the automorphism group is at least $587520$.
In~\cite{HKO2002} and~\cite{HM2014},
 the existence of extremal Type II $\Z_{2k}$-codes of length $64$ was shown.
The isomorphism classes and the automorphism groups of the associated 
extremal lattices are, however,  not clear.
In~\cite{Shimada2018}, 
another extremal lattice of rank $64$ was constructed
by means of a generalized quadratic residue code.
Its automorphism group  is of order $119040$.
%
\par
\medskip
Thanks are due to Professor Masaaki Harada 
and the anonymous referees  for their comments on the first version of this paper.
\par
\medskip
{\bf Conventions.}
Elements of a vector space or a lattice are written as 
\emph{row} vectors. 
For a lattice $L$  or a quadratic space $L$,
we denote by $\intfL{\, , \,}$ the symmetric bilinear form on $L$.
The automorphism group $\OG(L)$ of $(L, \intfL{\, , \,})$ acts on $L$ from the \emph{right}.

\section{Quebbemann lattice}\label{sec:Q}
\subsection{Quebbemann's construction}
We recall
Quebbemann's construction~\cite{Q84} of extremal lattices of rank $64$.
See also Chapter 8.3 (d) of~\cite{CSbook}.
In fact, 
our construction below  is slightly more general than Quebbemann's original.
\par
 Let $E$ be the root lattice of type $E_8$,
 that is, $E$ is the lattice of rank $8$ generated by vectors $e_1, \dots, e_8$ with 
 $\intfE{e_i, e_i}=2$ that form the dual graph in Figure~\ref{fig:E8}.
 It is well-known that $E$ is unimodular.
 \begin{figure}
\def\ha{40}
\def\hav{37}
\def\hd{25}
\def\hdv{22}
\def\he{10}
\def\hev{7}
\setlength{\unitlength}{1.4mm}
{\small
\begin{picture}(80,12)(-11, 6)
\put(22, 16){\circle{1}}
\put(23.5, 15.5){$e\sb 1$}
\put(22, 10.5){\line(0,1){5}}
\put(9.5, \hev){$e\sb 2$}
\put(15.5, \hev){$e\sb 3$}
\put(21.5, \hev){$e\sb 4$}
\put(27.5, \hev){$e\sb 5$}
\put(33.5, \hev){$e\sb 6$}
\put(39.5, \hev){$e\sb 7$}
\put(45.5, \hev){$e\sb {8}$}
\put(10, \he){\circle{1}}
\put(16, \he){\circle{1}}
\put(22, \he){\circle{1}}
\put(28, \he){\circle{1}}
\put(34, \he){\circle{1}}
\put(40, \he){\circle{1}}
\put(46, \he){\circle{1}}
\put(10.5, \he){\line(5, 0){5}}
\put(16.5, \he){\line(5, 0){5}}
\put(22.5, \he){\line(5, 0){5}}
\put(28.5, \he){\line(5, 0){5}}
\put(34.5, \he){\line(5, 0){5}}
\put(40.5, \he){\line(5, 0){5}}
\end{picture}
}
\caption{Dynkin diagram of type $E_{8}$}\label{fig:E8}
\end{figure}
We consider the $\F_3$-quadratic space
$U:=E/3E$.
A subspace $V$ of $U$ is said  to be \emph{maximal isotropic}
if $\dim V=4$ and $\intfU{v, v}=0$ holds for all $v\in V$.
There exists a direct sum decomposition
\begin{equation}\label{eq:Udecomp}
U=V\oplus  W,  \quad \textrm{where $V$ and $ W$ are maximal isotropic subspaces.}
\end{equation}
Let $\DDD$ be the set of ordered pairs $(V, W)$ of maximal isotropic subspaces of $U$
satisfying $V \cap W=0$.
Since $\intfU{\,,\,}$  is non-degenerate,
we have a natural isomorphism
\begin{equation}\label{eq:dualisom}
 W\;\cong\; \Hom(V, \F_3)\quad \textrm{for each $(V, W)\in \DDD$}.
\end{equation}
Let $S$ denote the orthogonal direct sum $E^8$ of eight copies of $E$.
Then $S$ is unimodular of rank $64$.
For $i=1, \dots 8$,
we denote by $U_i$ the $i{\tthh}$-factor of  $S/3S=U^8$.
We  choose and fix an element
\[
\Delta:=(\, (V_1, W_1), \dots,  (V_8, W_8)\,)
\]
 of $\DDD^8$.
We put $T:=\{1, \dots, 8\}$,
and for a subset $J$ of $T$, 
we put
\[
U_J:=\bigoplus_{j\in J} U_j, \quad V_J:=\bigoplus_{j\in J} V_j, \quad W_J:=\bigoplus_{j\in J} W_j.
\]
Then we have
\[
S/3S\;=\;U_T\; = \; V_T\oplus W_T.
\]
We consider $U_T$, $V_T$ and $W_T$ as $\F_3$-vector spaces.
Let $B$ be a linear subspace of $V_T$
with $\dim B=8$.
Note that $ W_T$ can be regarded as  the dual space of $V_T$ by~\eqref{eq:dualisom}.
We put
\[
B\sperp:=\set{\barz\in W_T}{\intfUT{\barz, \bary}=0\;\;\textrm{for all}\;\; \bary\in B}.
\]
Then we have $\dim B\sperp=24$.
Let $\pi\colon S\to S/3S=U_T$ denote the natural projection.
For any elements $\barx, \barx\sprime$ of $B\oplus B\sperp\subset  U_T$,
we have $\intfUT{\barx, \barx\sprime}=0$,
and hence,
for any elements $x, x\sprime$ of $\pi\inv (B\oplus B\sperp)$,
we have $\intfS{x, x\sprime}\equiv 0 \bmod 3$.
We denote by $Q(\Delta, B)$
the lattice whose underlying $\Z$-module is $\pi\inv (B\oplus B\sperp)$
and whose bilinear form $\intfQ{\;,\;}$ is given by 
\[
\intfQ{\;,\;}:=\onethird \,\intfS{\;,\;}.
\]
Then $Q:=Q(\Delta, B)$ is an even lattice, and we have 
\[
\det Q= \left(\onethird\right)^{64}   \det S \cdot  [S:Q]^2
= \left(\onethird\right)^{64} \left(\frac{|U_T|}{|B\oplus B\sperp|}\right)^2
=1.
\]
\begin{definition}
For   $J=\{i, j\}\subset T$ with $|J|=2$,
we denote by $p_{J}\colon B\to V_J$
the projection to the $J$-factor.
We say that $B$ satisfies \emph{$\ptwo$-condition} 
if $p_J$ is an isomorphism for all  $J\subset T$ with $|J|=2$.
\end{definition}
\begin{remark}\label{rem:psix}
If the projection $p_J\colon B\to V_J$ is an isomorphism,
then the projection 
$p_{T\setminus J}\colon B\sperp\to  W_{T\setminus J}$ to the  $(T\setminus J)$-factor is also an isomorphism.
Hence, if $B$ satisfies $\ptwo$-condition, then 
$B\sperp$ satisfies  the following \emph{$\psix$-condition}:
for all   $J\sprime\subset T$ with $|J\sprime|=6$,
the projection $p_{J\sprime}\colon B\sperp\to  W_{J\sprime}$ to the $J\sprime$-factor is an isomorphism. 
\end{remark}
%
It is obvious that $\ptwo$-condition imposes an open condition
on the Grassmannian variety 
of $8$-dimensional subspaces $B$ of $V_T$.
%
%
\begin{proposition}[Quebbemann~\cite{Q84}]  \label{prop:Q}
Let $U_T=V_T\oplus  W_T$ be the  decomposition of $U_T=S/3S$
associated with an element $\Delta$ of $ \DDD^8$, 
and let $B$ be an $8$-dimensional subspace of $V_T$.
Suppose that $B$ satisfies $\ptwo$-condition.
Then $\min (Q(\Delta, B))=6$ holds, that is, 
 $Q(\Delta, B)$ is an extremal lattice of rank $64$.
\end{proposition}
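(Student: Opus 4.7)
The plan is to establish $\min(Q)\ge 6$ and combine it with the general bound~\eqref{eq:min}, which for $n=64$ gives $\min(Q)\le 2+2\floor*{64/24}=6$. Because the form on $Q$ is $\onethird\intfS{\,,\,}$, and for every $x\in Q$ the reduction $\pi(x)\in B\oplus B\sperp$ is isotropic in $U_T$, one has $\intfS{x,x}\equiv 0\pmod 3$; combined with $S$ being even, this yields $\intfS{x,x}\in 6\Z$, so $Q$ is automatically an even lattice and the task reduces to ruling out $\intfS{x,x}\in\{6,12\}$ for $x\in Q\setminus\{0\}$.

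Three elementary facts about $E$ and $U=E/3E$ are the basic ingredients. First, any $v\in E$ with $\bar v=0$ lies in $3E$, so $\intfE{v,v}\in 18\Z$. Second, any nonzero $v\in E$ has $\intfE{v,v}\ge 2$ since $E$ is even with minimum $2$. Third, if $\bar v\in U$ is nonzero and isotropic, then $\intfE{v,v}$ lies in $2\Z\cap 3\Z=6\Z$ and is positive, so $\intfE{v,v}\ge 6$. On the combinatorial side, the $\ptwo$-condition forces a nonzero $\bar y\in B$ to have at least $7$ nonzero components $\bar y_i\in V_i$ (otherwise $\bar y$ lies in the kernel of some $p_J$ with $|J|=2$), and dually, by Remark~\ref{rem:psix}, a nonzero $\bar z\in B\sperp$ has at least $3$ nonzero components $\bar z_i\in W_i$.

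Decompose $\bar x=\bar y+\bar z$ with $\bar y\in B$ and $\bar z\in B\sperp$, and distinguish three cases. If $\bar x=0$ but $x\ne 0$, the first fact applied componentwise gives $\intfS{x,x}\ge 18$. If $\bar y=0$ and $\bar z\ne 0$, each component $\bar x_i=\bar z_i$ is isotropic in $U_i$, and the third fact applied to the at least three nonzero components yields $\intfS{x,x}\ge 18$. If $\bar y\ne 0$, then because $V_i\cap W_i=0$ the component $\bar x_i=\bar y_i+\bar z_i$ is nonzero whenever $\bar y_i\ne 0$, so $x_i\ne 0$ for at least seven indices $i$; the second fact then gives $\intfS{x,x}\ge 14$, and the divisibility of $\intfS{x,x}$ by $6$ upgrades this to $\intfS{x,x}\ge 18$. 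In every case $\intfQ{x,x}\ge 6$, which is the desired lower bound.

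The decisive subtlety lies in the last case: when $\bar y_i$ and $\bar z_i$ are both nonzero the mixed component $\bar x_i$ need not be isotropic in $U_i$, so the third fact is unavailable and only the weak bound $\intfE{x_i,x_i}\ge 2$ applies. Seven such components alone produce only $14$, falling one multiple of $6$ short of $18$; it is the simultaneous mod-$2$ (from evenness of $S$) and mod-$3$ (from isotropy of $B\oplus B\sperp$) constraints on $\intfS{x,x}$ that close the gap. Thus the $\ptwo$-condition is precisely the strength of support bound which, together with this built-in divisibility, forces $\min(Q)\ge 6$.
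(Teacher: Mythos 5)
Your proof is correct and follows essentially the same route as the paper's: the same decomposition $\bar x=\bar y+\bar z$, the same three cases ($\bar y\ne 0$; $\bar y=0,\bar z\ne 0$; $\bar x=0$), and the same use of the $\ptwo$- and $\psix$-conditions together with the isotropy of the $W_i$ and the evenness of $S$. The only difference is organizational: you derive direct lower bounds on $\intfS{x,x}$ in each case, whereas the paper argues by contradiction from the assumption $\intfQ{x,x}\le 4$; the mathematical content is identical.
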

\begin{proof}
We write $x\in Q(\Delta, B)$  as $x=(x_1, \dots, x_8)$, 
where  $x_i\in E$ is the $i\tthh$ component
by the embedding $Q(\Delta, B)\inj S=E^8$.
We put
\[
\barx:=\pi(x)=(\barx_1, \dots, \barx_8)\;\in\; B\oplus B\sperp\;\subset\; U_T,
\]
where $\barx_i\in U_i$ is $x_i \bmod 3E$.
Decomposing each  $\barx_i$ as 
 $\bary_i+\barz_i$
with  $\bary_i\in V_i$ and $\barz_i\in  W_i$,
we obtain $\barx=\bary+\barz$, where 
\[
\bary:=(\bary_1, \dots, \bary_8)\in B,
\quad 
\barz:=(\barz_1, \dots, \barz_8)\in B\sperp.
\]
Suppose that 
 $\intfQ{x, x}\le 4$.
We show that $x=0$.
Since 
\begin{equation}\label{eq:sum}
\intfS{x, x}=\sum \intfE{x_i, x_i}\le 12,
\end{equation}
we see that at least two of the components $x_i$ are zero.
Since at least two of  $\bary_i$ are zero, the assumption that
$B$ satisfy $\ptwo$-condition implies $\bary=0$.
Therefore we have $\barx=\barz$.
In particular, each $\barx_i$ belongs to $ W_i$.
Since $ W_i$ is isotropic in $U=E/3E$, 
we have $\intfE{x_i, x_i}\equiv 0\bmod 3$
and hence $\intfE{x_i, x_i}\equiv 0\bmod 6$.
Combining this with~\eqref{eq:sum}, 
we see that at most two of $x_i$ are non-zero.
Since $B\sperp$ satisfies $\psix$-condition by Remark~\ref{rem:psix}, we see that 
$\barz=0$. Therefore $\barx=0$,
and hence $x\in 3S$.
 If $x$ were non-zero,
we would have $\intfS{x,x}\ge 18$,
which is a contradiction.
\end{proof}
\begin{definition}
An extremal lattice of rank $64$ of the form $Q(\Delta, B)$, 
where $\Delta$ is an element of $\DDD^8$ and 
$B$ is an $8$-dimensional subspace of  $V_T$ satisfying $\ptwo$-condition,  
is called a \emph{Quebbemann lattice}.
\end{definition}
\subsection{Maximal isotropic subspaces of $U$}\label{subsec:UV1V2}
Recall that the lattice $E$ is equipped with a basis $e_1, \dots, e_8$ in Figure~\ref{fig:E8}.
We write elements of $E$ or of  $U=E/3E$ as row vectors 
with respect to $e_1, \dots, e_8$.
The automorphism group $\OG(E)$ 
of $E$ is generated by the reflections 
\[
x\;\mapsto\;  x\,-\,\intfE{x, e_i}\;  e_i
\]
with respect to the  vectors  $e_i$ ($i=1, \dots, 8$) of norm $2$,
and is of order $2^{14}\cdot  3^{5} \cdot 5^2\cdot 7$.
The natural homomorphism
$\OG(E)\to \OG(U)$
to the automorphism group of the $\F_3$-quadratic space $U$ is 
injective.
Let $\VVV$ be the set of maximal isotropic subspaces $V$ of $U$.
We can prove the following by 
the standard orbit stabilizer algorithm 
using {\tt GAP}~\cite{GAP}.
\begin{proposition}\label{prop1:UV1V2}
The size of $\VVV$ is $2240$, and 
$\OG(E)$ acts  transitively on $\VVV$. 
\qed
\end{proposition}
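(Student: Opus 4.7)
The plan is to prove the count and the transitivity separately.

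For the count, I would first verify that $U=E/3E$, viewed as an $\F_3$-quadratic space of dimension $8$, is hyperbolic (of plus/split type). Non-degeneracy is immediate from unimodularity of $E$, and the decomposition~\eqref{eq:Udecomp} recorded in the excerpt already exhibits a $4$-dimensional totally isotropic subspace, which forces $U$ to be hyperbolic of Witt index~$4$. The classical formula for the number of maximal totally isotropic subspaces in a hyperbolic $2n$-dimensional quadratic space over $\F_q$ then yields
$$|\VVV|\;=\;\prod_{i=0}^{3}(3^{i}+1)\;=\;2\cdot 4\cdot 10\cdot 28\;=\;2240.$$

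For transitivity I would follow the paper and run the orbit-stabilizer algorithm inside $\OG(E)$ using \texttt{GAP}. Concretely, I would represent $E$ by the Gram matrix of the basis $e_1,\dots,e_8$ from Figure~\ref{fig:E8}, generate $\OG(E)=W(E_8)$ by the eight simple reflections $x\mapsto x-\intfE{x,e_i}\,e_i$, and pass to the induced faithful action on $U\cong\F_3^{8}$. Then I would fix one explicit $V_0\in\VVV$, for instance by reducing modulo $3$ a specific rank-$4$ sublattice of $E$ whose Gram matrix is divisible by $3$, and compute the orbit $\OG(E)\cdot V_0$. Transitivity is equivalent to $|\OG(E)\cdot V_0|=2240$, or equivalently to $\bigl|\mathrm{Stab}_{\OG(E)}(V_0)\bigr|=|\OG(E)|/2240=311040$.

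The only real obstacle is size: $|\OG(E)|=696729600$ makes naive enumeration infeasible. However, this is exactly the regime in which the standard Schreier-tree orbit algorithm (as implemented by \texttt{GAP}'s \texttt{Orbit} and \texttt{Stabilizer}) is very efficient, which is why the excerpt delegates the verification to \texttt{GAP}. A purely structural argument would amount to identifying $\mathrm{Stab}_{\OG(E)}(V_0)$ with a known subgroup of $W(E_8)$ of order $311040=2^{8}\cdot 3^{5}\cdot 5$ (presumably the stabiliser of a suitable index-$81$ sublattice of $E$), but the combinatorial check via the orbit algorithm is cleaner and certifies both assertions simultaneously.
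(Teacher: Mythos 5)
Your proposal is correct, and it differs from the paper in a useful way on the counting half. The paper treats both assertions as a single \texttt{GAP} computation (enumerate $\VVV$, then run orbit--stabilizer), whereas you derive $|\VVV|=2240$ structurally from the classical formula $\prod_{i=0}^{3}(3^{i}+1)$ for the number of maximal totally isotropic subspaces of a split $8$-dimensional quadratic space over $\F_3$; this is a genuine gain, since it turns the machine enumeration of $\VVV$ into a one-line check and leaves only the orbit computation to the computer. One small repair: do not cite~\eqref{eq:Udecomp} as evidence that $U$ is hyperbolic, since at that point in the paper that decomposition is an assertion rather than something already verified --- instead check directly that the explicit $V_0$ of Table~\ref{table:basisvs} is $4$-dimensional and totally isotropic, which forces Witt index $4$. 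For transitivity your route coincides with the paper's, and rightly so: Witt's theorem gives transitivity of $\OG(U)\cong\mathrm{O}_8^+(3)$ on $\VVV$, but $\OG(E)$ has order $696729600$, far smaller than $|\mathrm{O}_8^+(3)|$, so its image is a proper subgroup and transitivity is not automatic; the Schreier-tree orbit computation showing $|\OG(E)\cdot V_0|=2240=|\VVV|$ (equivalently $|\mathrm{Stab}_{\OG(E)}(V_0)|=311040=2^8\cdot 3^5\cdot 5$, matching the stabilizer order quoted after Proposition~\ref{prop1:UV1V2}) is exactly what is needed, and your a priori count of $|\VVV|$ makes the conclusion ``orbit size equals total size'' immediate.
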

Let $V_0\in \VVV$ be the maximal isotropic subspace with  basis $v_1, \dots, v_4$ 
in Table~\ref{table:basisvs}.
The stabilizer subgroup $\Stab(V_0)$ of $V_0$ in $\OG(E)$ is of order $2^8\cdot 3^5 \cdot 5$.
Let $\WWW(V_0)$ be the set of all $W\in \VVV$
such that $V_0 \cap W=0$.
We have $|\WWW(V_0)|=729$.
\begin{proposition}\label{prop2:UV1V2}
The action of $\Stab(V_0)$ decomposes $\WWW(V_0)$ 
into two orbits
of size $648$ and $81$.
The orbit of size $648$ contains $W^\typeI$
with  basis $v_1^{*\typeI}, \dots, v_4^{*\typeI}$ 
in Table~\ref{table:basisvs},
and the orbit of size $81$ contains $W^{\typeII}$
with  basis $v_1^{*\typeII}, \dots, v_4^{*\typeII}$ 
in Table~\ref{table:basisvs}.
\qed
\end{proposition}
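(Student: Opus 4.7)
The plan is to verify the proposition by a direct computation in GAP, in the same spirit as the proof of Proposition~\ref{prop1:UV1V2}. Since the action of $\OG(E)$ on the $\F_3$-quadratic space $U = E/3E$ is already implemented, the only new data required are the subgroup $\Stab(V_0)$, the set $\WWW(V_0)$, and the two candidate subspaces $W^\typeI$ and $W^\typeII$ extracted from Table~\ref{table:basisvs}.

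First, I would realize $V_0$ as the $\F_3$-span of $v_1, \dots, v_4$ and compute $\Stab(V_0) \le \OG(E)$ as its setwise stabilizer under the induced action on $U$. By orbit-stabilizer applied to Proposition~\ref{prop1:UV1V2}, the order of $\Stab(V_0)$ must equal $|\OG(E)|/|\VVV| = 2^8 \cdot 3^5 \cdot 5 = 311040$, which serves as a consistency check. Next I would enumerate $\WWW(V_0)$ by sweeping the $2240$ maximal isotropic subspaces in $\VVV$ (already produced for Proposition~\ref{prop1:UV1V2}) and retaining those with trivial intersection with $V_0$; this yields the $729$ elements of $\WWW(V_0)$. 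I would then run the standard orbit algorithm with $\Stab(V_0)$ acting on this $729$-element set, and finally locate $W^\typeI$ and $W^\typeII$ (built from $v_i^{*\typeI}$ and $v_i^{*\typeII}$ in Table~\ref{table:basisvs}) in the resulting orbit partition.

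A successful run must return orbits whose sizes sum to $729$; the expected pair $(648, 81)$ is compatible with divisibility, since $311040 = 480 \cdot 648 = 3840 \cdot 81$. The only real obstacle is implementation hygiene rather than mathematics: one must propagate the action of $\OG(E)$ through the mod-$3$ reduction correctly and reproduce the entries of Table~\ref{table:basisvs} faithfully. No conceptual argument about the stratification (for instance, by the rank of the natural alternating form on $V_0$ associated to each transverse complement) is required --- although such a description would clarify why the orbit sizes are $3^4$ and $8 \cdot 3^4$, the proposition itself is settled directly by the orbit computation.
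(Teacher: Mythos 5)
Your proposal is correct and matches the paper's own justification: the proposition is established by the standard orbit--stabilizer computation in {\tt GAP} (the same method invoked for Proposition~\ref{prop1:UV1V2}), namely computing $\Stab(V_0)$, enumerating the $729$ transverse maximal isotropic subspaces, and partitioning them into orbits containing $W^{\typeI}$ and $W^{\typeII}$. Your added consistency checks ($|\Stab(V_0)|=2^8\cdot 3^5\cdot 5$ and the factorizations $311040=480\cdot 648=3840\cdot 81$, matching the stabilizer orders in Corollary~\ref{cor:GIGII}) are a sensible bonus but not a departure from the paper's approach.
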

%
%
%
%
\begin{table}
\[
\begin{array}{lcl}
v_1= ( 0, 0, 0, 0, 0, 0, 1, 2 ),   &  \phantom{aa}  & v_2=( 0, 0, 0, 1, 2, 0, 0, 0  ), \\
v_3=( 0, 1, 0, 0, 1, 0, 0, 1  ), & & v_4= ( 1, 0, 2, 0, 0, 0, 0, 1  ).
\end{array}
\]
\vskip 0mm
\[
\begin{array}{lcl}
v_1^{*\typeI}= ( 1, 2, 1, 0, 2, 2, 0, 2  ),   &  \phantom{aa}  & v_2^{*\typeI}= ( 0, 1, 2, 0, 0, 0, 0, 0   ), \\
v_3^{*\typeI}=( 2, 2, 1, 0, 1, 0, 2, 2  ), & & v_4^{*\typeI}= ( 1, 0, 2, 0, 1, 0, 2, 1  ).
\end{array}
\]
\vskip 0mm
\[
\begin{array}{lcl}
v_1^{*\typeII}= ( 1, 2, 1, 0, 2, 2, 0, 2  ),   &  \phantom{aa}  & v_2^{*\typeII}= ( 1, 1, 1, 0, 0, 0, 0, 1  ), \\
v_3^{*\typeII}=( 0, 2, 0, 0, 1, 0, 2, 0  ), & & v_4^{*\typeII}= ( 1, 2, 2, 2, 1, 0, 2, 0  ).
\end{array}
\]
\vskip 4mm
\caption{Bases of maximal isotropic subspaces $V_0, W^{\typeI}, W^{\typeII}$}\label{table:basisvs}
\end{table}
\begin{remark}
The basis $v_1^*, \dots, v_4^*$ of $ W$ above is dual to 
the basis $v_1, \dots, v_4$ of $V_0$ by the canonical pairing~\eqref{eq:dualisom}. 
\end{remark}
\begin{corollary}\label{cor:GIGII}
The action of $\OG(E)$ decomposes $\DDD$ into two orbits.
One orbit contains $(V_0, W^{ \typeI})$ with the stabilizer subgroup $G^{\typeI}$ of order $480$,
and the other orbit contains $(V_0, W^{  \typeII})$ with the stabilizer subgroup $G^{\typeII}$  of order $3840$.
\qed
\end{corollary}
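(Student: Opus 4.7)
The plan is to reduce the orbit question on $\DDD$ to the orbit question on $\WWW(V_0)$ via the standard correspondence between orbits on ordered pairs and orbits of a stabilizer on one coordinate, then read off stabilizer sizes by orbit-stabilizer.

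First I would observe that since $\OG(E)$ acts transitively on $\VVV$ by Proposition~\ref{prop1:UV1V2}, every element of $\DDD$ is $\OG(E)$-equivalent to a pair of the form $(V_0, W)$ for some $W\in \WWW(V_0)$. Thus the $\OG(E)$-orbits on $\DDD$ are in bijection with the $\Stab(V_0)$-orbits on $\WWW(V_0)$. Proposition~\ref{prop2:UV1V2} describes the latter: there are exactly two orbits, one of size $648$ containing $W^{\typeI}$ and one of size $81$ containing $W^{\typeII}$. This gives the decomposition of $\DDD$ into two $\OG(E)$-orbits, containing the two representatives named in the statement.

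Next I would compute the stabilizer orders using orbit-stabilizer. Since $|\Stab(V_0)|=2^8\cdot 3^5\cdot 5=311040$ by Proposition~\ref{prop1:UV1V2}, the stabilizer $G^{\typeI}$ of $(V_0,W^{\typeI})$ inside $\OG(E)$ coincides with $\Stab_{\Stab(V_0)}(W^{\typeI})$, which has order $311040/648=480$. Similarly, $G^{\typeII}=\Stab_{\Stab(V_0)}(W^{\typeII})$ has order $311040/81=3840$. These are precisely the two numbers claimed.

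Since this is entirely a consequence of Propositions~\ref{prop1:UV1V2} and~\ref{prop2:UV1V2} together with the orbit-stabilizer theorem, I do not anticipate any genuine obstacle; the only thing to check is that the two arithmetic divisions indeed give integers, which they do, confirming internal consistency of the orbit sizes announced in the propositions.
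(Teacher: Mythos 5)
Your argument is correct and is precisely the intended derivation: the paper states this corollary without proof because it follows from Propositions~\ref{prop1:UV1V2} and~\ref{prop2:UV1V2} by the standard reduction to $\Stab(V_0)$-orbits on $\WWW(V_0)$ and the orbit--stabilizer theorem, exactly as you do. The two divisions $311040/648=480$ and $311040/81=3840$ check out, so nothing is missing.
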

\erase{
The group $\OG(E)^8\semidirectproduct \SSSS_8$ acts on $S$,
where $\SSSS_8$ is the symmetric  group of degree $8$ acting on $S=E^8$ by permutations of factors.
Considering this action,
we obtain the following:
\begin{corollary}
For $k=0, \dots, 8$, let $\Delta_k$ denote  the element
$((V_1, W_1), \dots, (V_8, W_8))$ of $\DDD^8$ such that 
$V_i$ is equal to the representative $V_0$ for $i=1, \dots, 8$ and that
\[
W_i=\begin{cases}
W^{\typeI} & \textrm{for $i\le k$,} \\
W^{\typeII} & \textrm{for $i>k$.} 
\end{cases}
\]
Then
each  Quebbemann lattice 
is isomorphic to a  Quebbemann lattice 
of the form $Q(\Delta_k, B)$ for some $k$,
where $B$ is an $8$-dimensional linear subspace of $V_0^8$ satisfying $\ptwo$-condition.
\qed
\end{corollary}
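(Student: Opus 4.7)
The plan is to reduce an arbitrary $(\Delta, B)$ to the canonical form $(\Delta_k, B')$ by successively applying orthogonal automorphisms of $S$ coming from $G := \OG(E)^8 \rtimes \SSSS_8$. Any $\varphi \in G$ acts on $S = E^8$ preserving $\intfS{\,,\,}$, and the induced action on $S/3S = U_T$ respects (up to permutation of indices) the factorwise decomposition $U_T = U_1 \oplus \cdots \oplus U_8$. Hence $\varphi$ carries the pair $(\Delta, B)$ to a pair $(\Delta', B')$ of the same type, and $Q(\Delta, B) \cong Q(\Delta', B')$ as lattices.

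First I would use Proposition~\ref{prop1:UV1V2}: since $\OG(E)$ acts transitively on $\VVV$, for each $i$ there exists $g_i \in \OG(E)$ with $V_i g_i = V_0$. Applying $(g_1, \dots, g_8) \in \OG(E)^8$, we may assume each $V_i = V_0$, so that each $W_i g_i$ now lies in $\WWW(V_0)$. Next, by Proposition~\ref{prop2:UV1V2}, for each $i$ there exists $h_i \in \Stab(V_0)$ such that $W_i g_i h_i \in \{W^{\typeI}, W^{\typeII}\}$; since $h_i$ fixes $V_0$, applying $(h_1, \dots, h_8)$ keeps $V_i = V_0$ while normalizing every $W_i$ to one of the two representatives. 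Finally, I would use the $\SSSS_8$-factor of $G$ to permute the eight indices so that the factors of type $\typeI$ occupy positions $1, \dots, k$ and those of type $\typeII$ occupy positions $k+1, \dots, 8$, where $k$ is the number of original indices of type $\typeI$. The resulting $\Delta$ is precisely $\Delta_k$.

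It remains to verify that the resulting $B'$, the image of $B$ under the composite transformation, is an $8$-dimensional subspace of $V_0^8 = V_T$ satisfying $\ptwo$-condition. Because each step was an orthogonal automorphism of $S$ that sends the ordered pair $(V_T, W_T)$ for $\Delta$ to the corresponding pair for $\Delta_k$, the direct-sum decomposition $U_T = V_T \oplus W_T$ is preserved, so $B' \subset V_T = V_0^8$. For the $\ptwo$-condition: the transformation is a direct sum of individual linear isomorphisms $V_i \to V_0$ followed by a coordinate permutation, so for any two-element subset $J \subset T$ the projection $p_J \colon B' \to V_0^{\oplus J}$ is the composite of the projection $p_{\sigma^{-1}(J)} \colon B \to V_{\sigma^{-1}(J)}$ with a linear isomorphism, where $\sigma$ is the applied permutation. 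Hence $p_J$ is an isomorphism for every $J$ of size $2$, and $B'$ satisfies $\ptwo$-condition.

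I do not expect a serious obstacle; the argument is bookkeeping about the action of $G$. The only point requiring a moment's thought is the preservation of the $\ptwo$-condition, which is clear once one notes that the reduction respects the factorwise decomposition up to permutation of factors and linear isomorphisms within each factor.
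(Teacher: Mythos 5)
Your argument is correct and is exactly the approach the paper intends: the paper's own justification is the single remark that $\OG(E)^8\rtimes\SSSS_8$ acts on $S=E^8$, and your proposal simply fills in the normalization of each $(V_i,W_i)$ via Propositions~\ref{prop1:UV1V2} and~\ref{prop2:UV1V2} (equivalently Corollary~\ref{cor:GIGII}), the reordering by $\SSSS_8$, and the check that the $\ptwo$-condition is preserved. No gaps.
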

}
\subsection{Construction of $Q(\Delta, B)$ with an automorphism  of order $8$}\label{subsec:cyc8}
We fix  a pair $(V,  W)\in \DDD$,
and consider the   $8$-tuple
\[
\Delta_0:=((V,  W), \dots, (V,  W))\in \DDD^8.
\]
Let $G$ be the stabilizer subgroup of $(V,  W)$ in $\OG(E)$,
and 
let $\gamma$ be an element of order $8$ in $G$.
(The stabilizer subgroup $G^{\typeI}$ (resp.~$G^{\typeII}$) in Corollary~\ref{cor:GIGII} contains 
$120$ elements (resp.~$1360$ elements)  of order $8$.)
We define an automorphism  $\tilde{\gamma}$ of $S=E^8$ by
\[
x=(x_1, \dots, x_8)\mapsto x^{\tilde{\gamma}}=(x_2^\gamma,  \dots, x_8^\gamma, x_1^\gamma).
\]
The action of $\tilde{\gamma}$ on $S/3S=U_T$ preserves the decomposition $U_T=V_T\oplus  W_T$
associated with $\Delta_0$ above.
For $v\in V_T=V^8$,
we denote by $B(\gamma, v)$ the linear subspace of $V_T$
generated by the orbit of $v$ under the action of  $\gen{\tilde{\gamma}}\cong \Z/8\Z$.
If $B(\gamma, v)$ is of dimension $8$ and satisfies $\ptwo$-condition,
then $Q(\Delta_0, B(\gamma, v))$ is a Quebbemann lattice 
invariant under the action of $\gen{\tilde{\gamma}}$ on $S$.
In particular, the automorphism group $\OG(Q)$ of $Q:=Q(\Delta_0, B(\gamma, v))$ contains 
an element 
\[
\tilde{\gamma}_Q:=\tilde{\gamma}|Q
\]
 of order $8$.
\section{Computations on Quebbemann lattices}
We fix an $8$-tuple   $\Delta=((V_1, W_1), \dots, (V_8, W_8))\in \DDD^8$.
Let $B$ be an $8$-dimensional linear subspace of $V_T=V_1\oplus\cdots\oplus V_8$
satisfying $\ptwo$-condition,
and we consider the extremal lattice $Q(\Delta, B)$ of rank $64$.
\subsection{Enumeration of minimal-norm vectors}\label{subsec:enumeration}
In this section,
we explain a method to calculate the set  
\[
\Min(Q(\Delta, B)):=\set{x\in Q(\Delta, B)}{\intfQ{x, x}=6}=\set{x\in Q(\Delta, B)}{\intfS{x, x}=18}
\]
of all minimal-norm vectors.
A \emph{norm-type} is an $8$-tuple $[n_1, \dots, n_8]$ 
of non-negative even integers $n_i$ such that $\sum n_i=18$.
For $x=(x_1, \dots, x_8)\in \Min(Q(\Delta, B))$, 
we put
\[
\nu(x):=[\intfE{x_1, x_1}, \dots, \intfE{x_8, x_8}],
\]
and call it the \emph{norm-type of  $x$}.
For a non-negative even integer $a$, we put
\[
N(a, E):=\set{v\in E}{\intfE{v,v}=a},
\]
and let $N(a, U)\subset U$ be the image of $N(a, E)$ by 
the natural projection $E\to U$.
(See~Table~\ref{table:NaENaU}.)
A minimal-norm vector $x\in \Min(Q(\Delta, B))$ is said to be \emph{of divisible type}
if $x\in 3S$ holds,
or equivalently,
if  only one of $x_1, \dots, x_8$ (say $x_i$) is non-zero
and $x_i$ is written as $3 x\sprime_i$ by some $x\sprime_i\in N(2, E)$,
or equivalently,
if the norm-type $\nu(x)$ of $x$ is obtained by a permutation of components from 
$[0,\dots, 0, 18]$.
Since $ |N(2, E)|=240$, 
there exist exactly $240\times 8$ minimal-norm vectors of divisible type.
\begin{table}
\[
\begin{array}{c | cc}
\;\;a\;\;  & |N(a, E)| & |N(a, U)| \\
\hline 
 0 & 1 & 1 \\
 2 & 240 & 240 \\
 4 & 2160 & 2160\\
 6 & 6720 & 2240
\end{array}
\]
\caption{$N(a, E)$ and $N(a, U)$}\label{table:NaENaU}
\end{table}
\begin{proposition}
Let $x\in Q(\Delta, B)$ be a minimal-norm vector that is not of divisible type.
Then the norm-type $\nu (x)$ of $x$ is obtained by a permutation of components from  one of the following:
\begin{equation}\label{eq:normtypes}
\begin{array}{ll}
{[0,0,0,0,0,6,6,6]}  & \textrm{\rm  (of type $0^5 6^3$)}, \\
{[0,2,2,2,2,2,4,4]}  & \textrm{\rm (of type $0^1 2^5 4^2$)}, \\
{[0,2,2,2,2,2,2,6]}  & \textrm{\rm (of type $0^1 2^6 6^1$)}, \\
{[2,2,2,2,2,2,2,4]}  & \textrm{\rm (of type $2^7 4^1$)}.
\end{array}
\end{equation}
\end{proposition}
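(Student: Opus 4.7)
The plan is to refine the argument of Proposition~\ref{prop:Q}, now pursuing equality $\intfS{x,x}=18$ rather than a strict lower bound. Write $x=(x_1,\dots,x_8)\in Q(\Delta,B)$, decompose $\bar{x}:=\pi(x)$ as $\bar{y}+\bar{z}$ with $\bar{y}\in B$ and $\bar{z}\in B\sperp$, and set $n_i:=\intfE{x_i,x_i}$, so each $n_i$ is a non-negative even integer and $\sum_i n_i=18$. Two pointwise observations come first. If $\bar{y}_i=0$, then $\bar{x}_i\in W_i$ is isotropic, whence $n_i\equiv 0\bmod 3$ and therefore $n_i\in\{0,6,12,18\}$. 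If $\bar{x}_i=0$ but $x_i\ne 0$, then $x_i\in 3E\setminus\{0\}$ forces $n_i\ge 18$, so that $\sum_j n_j=18$ already puts $x$ into divisible type; since that case is excluded, I may assume $x_i=0$ precisely when $\bar{x}_i=0$.

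Next I would invoke the two code-theoretic support bounds: $\ptwo$-condition implies that any non-zero element of $B$ has support of size at least $7$ among the eight factors, and by Remark~\ref{rem:psix} any non-zero element of $B\sperp$ has support of size at least $3$. The argument then splits on $\bar{y}$. In Case~A, $\bar{y}=0$, so $\bar{x}=\bar{z}$ and every $n_i$ lies in $\{0,6,12,18\}$; since $x$ is not of divisible type, $\bar{z}\ne 0$, hence its support has size at least $3$, and the non-zero $n_i$ occur exactly at that support. The only multiset in $\{6,12,18\}$ of length at least $3$ summing to $18$ is three copies of $6$, giving $[0^5,6^3]$. The candidates $[18,0,\dots,0]$ and $[12,6,0,\dots,0]$ are discarded by the support bound combined with the first-paragraph parity reduction.

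In Case~B, $\bar{y}\ne 0$, so the support of $\bar{y}$ has size $7$ or $8$; whenever $\bar{y}_i\ne 0$, one has $\bar{x}_i\ne 0$ and hence $n_i\ge 2$. If $|\operatorname{supp}\bar{y}|=8$, all eight $n_i$ are at least $2$ and even with total $18$, forcing $[2^7,4]$. If $|\operatorname{supp}\bar{y}|=7$ with exceptional index $i_0$, then $n_{i_0}\in\{0,6,12,18\}$ while the other seven entries are at least $2$; writing $n_i=2+m_i$ and solving $n_{i_0}+\sum m_i=4$ forces $n_{i_0}=0$ and leaves only $[0,2^5,4^2]$ and $[0,2^6,6]$. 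Assembling the three non-divisible cases recovers the four listed norm-types. The most delicate step is eliminating the hypothetical type $[12,6,0,\dots,0]$ in Case~A; it rests squarely on the size-$\ge 3$ support bound for $B\sperp$ together with the parity reduction that forbids any non-zero $x_i$ with $\bar{x}_i=0$ outside the divisible case. I do not foresee further obstacles beyond bookkeeping.
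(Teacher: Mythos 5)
Your proposal is correct and follows essentially the same route as the paper: decompose $\pi(x)=\bar{y}+\bar{z}$ with $\bar{y}\in B$, $\bar{z}\in B\sperp$, split on whether $\bar{y}=0$, and combine the support bounds coming from the $\ptwo$- and $\psix$-conditions with the isotropy of the $W_i$ (giving $n_i\equiv 0\bmod 3$ when $\bar{y}_i=0$) and the constraint $\sum n_i=18$. You merely spell out a few steps the paper leaves implicit (the equivalence $x_i=0\iff\bar{x}_i=0$ off divisible type, and the elimination of $[12,6,0^6]$), so no further comment is needed.
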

\begin{proof}
As in the proof of~Proposition~\ref{prop:Q}, 
we see that  $\barx:=\pi(x)\in B\oplus B\sperp$ 
is decomposed uniquely as $\bary+\barz$, where 
$\bary=(\bary_1, \dots, \bary_8)\in B$ and $\barz=(\barz_1, \dots, \barz_8)\in B\sperp$.
Suppose that $\bary=0$.
Since $x$ is not of divisible type,
we see that $\barx=\barz$ is not zero.
Since $B\sperp$ satisfies $\psix$-condition,
at most five of $\barz_1, \dots, \barz_8$  are zero.
Since $\barx_i=\barz_i\in  W_i$, we have $\intfE{x_i, x_i}\equiv 0 \bmod 3$ and hence 
$\intfE{x_i, x_i} \in \{0,6,12, 18\}$.
Combining these, we see that $\nu(x)$ is of type $0^5 6^3$.
Suppose that $\bary\ne0$.
Since $B$ satisfies $\ptwo$-condition,
at most one of $\bary_1, \dots, \bary_8$ is zero.
Hence at most one of $x_1, \dots, x_8$ is zero.
Therefore $\nu(x)$ is either  of type $0^1 2^5 4^2$ or $0^1 2^6 6^1$  or $2^7 4^1$.
\end{proof}
For $k=1, \dots, 8$,
let $e_1\spar{k}, \dots, e_8\spar{k}$ be the copy of the basis 
$e_1, \dots, e_8$ of $E$ in the $k\tthh$ factor of $S=E^8$.
We use the ordered set
\begin{equation}
e_1\spar{1}, \dots, e_8\spar{1}, e_1\spar{2}, \dots, e_8\spar{2},
\dots\dots\dots, 
e_1\spar{8}, \dots, e_8\spar{8}
\end{equation}
of vectors 
as a basis of $S$ and of $S/3S=U^8=U_T$.
\begin{proposition}
The ternary code $B\oplus B\sperp\subset U_T$ 
is generated by row vectors 
of a $32\times 64$ matrix of the echelon form as in Figure~\ref{fig:echelon},
where $I_8$ is the identity matrix of size $8$,
$A_i$ are $4\times 8$ matrices whose row vectors form a basis of $ W_i\subset U$
for $i=3, \dots, 6$,
 $C_{\mu\nu}$ are some $8\times 8$ matrices,
 and the blank blocks are zero matrices.
\end{proposition}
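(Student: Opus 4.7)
The plan is to use the $\ptwo$-condition on $B$ together with the dual $\psix$-condition on $B\sperp$ (Remark~\ref{rem:psix}) to read off canonical generators of $B\oplus B\sperp$, and then to convert them to the echelon form of Figure~\ref{fig:echelon} by a single change of basis on each $U_k$-block.

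First I would set up the dimensions: since $V_T\cap W_T=0$ with $\dim_{\F_3}B=8$ and $\dim_{\F_3}B\sperp=24$, the code $B\oplus B\sperp$ has $\F_3$-dimension $32$, accounting for the $32\times 64$ size, and the ordered basis $\{e_i\spar{k}\}$ partitions the $64$ columns into eight blocks of size $8$, one per factor $U_k$.

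Next I would analyse the $B$-row strip. The $\ptwo$-condition exhibits $p_{\{1,2\}}\colon B\to V_1\oplus V_2$ as an isomorphism, so $B$ is the graph of a linear map $\varphi\colon V_1\oplus V_2\to V_3\oplus\cdots\oplus V_8$. Fix a basis of $V_1\oplus V_2$, lift it to a basis of $B$, and write the lifts in the $e$-basis; after the row transformation on the $U_1\oplus U_2$-columns that rewrites the $V_1\oplus V_2$-coordinates in terms of their canonical basis, the $I_8$ block appears as claimed, while the entries coming from $\varphi$, expressed in the $e$-basis of each $V_j$ with $j\ge 3$, populate the $C_{\mu\nu}$-blocks of that row strip.

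Dually, the $\psix$-condition gives an isomorphism $p_{\{3,\dots,8\}}\colon B\sperp\to W_3\oplus\cdots\oplus W_8$, so $B\sperp$ is the graph of a map $\psi\colon W_3\oplus\cdots\oplus W_8\to W_1\oplus W_2$. Lifting a basis of each $W_i$ to $B\sperp$ supplies four rows whose restriction to $U_i$ is precisely that basis, which in $e$-coordinates is by definition the matrix $A_i$; this places the $A_i$-blocks in their positions along the $B\sperp$-strip, and $\psi$ produces the remaining $C_{\mu\nu}$-blocks in the $U_1\oplus U_2$-columns. The blank blocks are forced by the direct-sum decomposition $U_T=V_T\oplus W_T$ together with these two graph structures.

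The main obstacle is purely organisational: tracking which $(\mu,\nu)$-labelled block sits where, confirming that the change of basis on each $U_k$-factor from the $v$- and $w$-bases of $V_k$ and $W_k$ to the $e$-basis of $U_k$ transports the two graph descriptions onto exactly the pattern of Figure~\ref{fig:echelon}, and counting pivot blocks so that the row blocks assemble correctly into a $32\times 64$ matrix. No deeper algebraic idea is needed beyond the two graph-of-a-map descriptions above.
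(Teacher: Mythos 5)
There is a genuine gap here. Your two graph descriptions do yield a generator matrix of $B\oplus B\sperp$, but not the matrix of Figure~\ref{fig:echelon}, and the conversion between the two is not the ``purely organisational'' step you claim --- it is the entire content of the proposition. The first sixteen rows of the figure carry full $I_8$ pivot blocks on the $U_1$- and $U_2$-columns, i.e.\ they form a section of the projection $P_{12}\colon B\oplus B\sperp\to U_1\oplus U_2$ over the fixed basis $e_1\spar{1},\dots,e_8\spar{2}$. Such rows cannot come from your $B$-strip: $B$ projects onto the $4$-dimensional subspace $V_1\subset U_1$, not onto all of the $8$-dimensional $U_1$, so no row operations confined to the eight rows of $B$ can create an $I_8$ in the $U_1$-columns, and no change of basis on the $U_k$-blocks is available either, since the columns are pinned to the $e$-basis (the back-track search~\eqref{eq:barx1barx1} depends on this). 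The top strip necessarily mixes $B$ with $B\sperp$; the point of the paper's proof is that $P_{12}$ restricted to $B\oplus B\sperp$ is surjective because the image of $B$ is $V_1\oplus V_2$ (the $\ptwo$-condition) \emph{while simultaneously} the image of $B\sperp$ is $W_1\oplus W_2$ (the $\psix$-condition applied to the factor $\{1,\dots,6\}$), and these sum to $U_1\oplus U_2$.

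The lower strip is also not what you construct. Your $B\sperp$-strip, parametrized as a graph over $W_3\oplus\cdots\oplus W_8$, has six pivot blocks $A_3,\dots,A_8$ with the dependent entries sitting in the $U_1\oplus U_2$-columns; the figure has only $A_3,\dots,A_6$, occupying sixteen rows that vanish on $U_1\oplus U_2$ and carry their dependent entries $C_{37},C_{38},C_{47},C_{48}$ in the $U_7\oplus U_8$-columns. Those sixteen rows are a basis of $\Ker P_{12}$, and to place them in the stated form one must show (a) that $\Ker P_{12}$ lies inside $B\sperp$ (if $\bary+\barz$ has vanishing $U_1$- and $U_2$-components, then $\bary_1=\bary_2=0$ because $V_i\cap W_i=0$, whence $\bary=0$ by the $\ptwo$-condition), and (b) that $P_{3456}$ maps $\Ker P_{12}$ isomorphically onto $W_3\oplus\cdots\oplus W_6$, which is what allows each group of four rows to be chosen supported only on $U_i\oplus U_7\oplus U_8$ and justifies the blank blocks to the right of each $A_i$. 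Neither (a) nor (b) appears in your argument, so the shape of Figure~\ref{fig:echelon} is asserted rather than proved.
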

\begin{proof}
Since the projection $p_{12}\colon B\to V_1\oplus V_2$ to the $(12)$-factor 
and  the projection  $p_{\,\overline{78}}\colon B\sperp\to  W_1\oplus\cdots\oplus W_6$ to the $(123456)$-factor
are both isomorphisms, the projection 
\[
P_{12}\colon B\oplus B\sperp \to U\oplus U
\]
to the $(12)$-factor  is surjective, and its kernel $\Ker P_{12}$
is mapped isomorphically to $ W_3\oplus\cdots\oplus W_6$ 
by the projection 
\[
P_{3456}\colon \Ker P_{12}\to U\oplus U\oplus U \oplus U
\]
to the $(3456)$-factor.
\end{proof}
\begin{figure}
\setlength{\unitlength}{1.4mm}
\begin{picture}(74, 38)(-10,-1)
\put(0,0){\line(1,0){64}}
\put(0,8){\line(1,0){64}}
\put(0,16){\line(1,0){64}}
\put(0,24){\line(1,0){64}}
\put(0,32){\line(1,0){64}}
\put(0,0){\line(0,1){32}}
\put(8,0){\line(0,1){32}}
\put(16,0){\line(0,1){32}}
\put(24,0){\line(0,1){32}}
\put(32,0){\line(0,1){32}}
\put(40,0){\line(0,1){32}}
\put(48, 0){\line(0,1){32}}
\put(56, 0){\line(0,1){32}}
\put(64, 0){\line(0,1){32}}
\put(16,12){\line(1,0){16}}
\put(32,4){\line(1,0){16}}
\put(3.2,27.2){$I_8$}
\put(11.2,19.2){$I_8$}
\put(19.0,13.2){$A_3$}
\put(27.0,9.2){$A_4$}
\put(35.0,5.2){$A_5$}
\put(43.0,1.2){$A_6$}
\put(18.4,27.2){$C_{13}$}
\put(26.4,27.2){$C_{14}$}
\put(34.4,27.2){$C_{15}$}
\put(42.4,27.2){$C_{16}$}
\put(50.4,27.2){$C_{17}$}
\put(58.4,27.2){$C_{18}$}
\put(18.4,19.2){$C_{23}$}
\put(26.4,19.2){$C_{24}$}
\put(34.4,19.2){$C_{25}$}
\put(42.4,19.2){$C_{26}$}
\put(50.4,19.2){$C_{27}$}
\put(58.4,19.2){$C_{28}$}
\put(50.4,11.2){$C_{37}$}
\put(58.4,11.2){$C_{38}$}
\put(50.4,3.2){$C_{47}$}
\put(58.4,3.2){$C_{48}$}
\end{picture}
\caption{Echelon form of a generator matrix of $B\oplus B\sperp$}\label{fig:echelon}
\end{figure}
We make the symmetric group $\SSSS_8$ act on $S=E^8$ by
\[
(x_1, \dots, x_8)^{\sigma}:=(x_{\sigma(1)}, \dots, x_{\sigma(8)})
\quad \textrm{for $\sigma \in \SSSS_8$.}
\]
For $\Delta=((V_1, W_1), \dots, (V_8, W_8)) \in \DDD^8$,
we put
\[
\Delta^{\sigma}:=((V_{\sigma(1)}, W_{\sigma(1)}), \dots, (V_{\sigma(8)}, W_{\sigma(8)})).
\]
Then 
we have $Q(\Delta, B)^{\sigma}=Q(\Delta^{\sigma}, B^{\sigma})$ in $S$.
If $x\in \Min(Q(\Delta, B))$ is of norm-type $[n_1, \dots, n_8]$,
then $x\sp{\sigma}\in \Min(Q(\Delta^{\sigma}, B\sp{\sigma}))$ is of norm-type $[n_{\sigma(1)}, \dots, n_{\sigma(8)}]$.
\par
Let $n=[n_1, \dots, n_8]$ be a norm-type 
obtained by a permutation of components from one of 
the norm-types in~\eqref{eq:normtypes}.
We calculate the set $\barM(n)$ 
of codewords $\barx=\pi(x)\in B\oplus B\sperp$ 
corresponding $x\in \Min(Q(\Delta, B))$ with $\nu(x)=n$
by the following method.
\par
First 
we choose a permutation $\tau\in \SSSS_8$ such that  $n^{\tau}=[n_{\tau(1)}, \dots, n_{\tau(8)}]$ satisfies 
\[
n_{\tau(1)}\;\le\; n_{\tau(2)}\;\le\;\dots\; \le\; n_{\tau(8)}.
\]
We then transform a generator matrix of $(B\oplus B\sperp)^{\tau}=B^{\tau}\oplus B^{\tau\perp}$ 
into the echelon form in Figure~\ref{fig:echelon},
and search for $\barx_1, \dots, \barx_8\in U$ satisfying
conditions~\eqref{eq:barx1barx1} below
by back track search;
that is, 
if we find $(\barx_1, \dots, \barx_i)$ satisfying the first $i$ conditions of~\eqref{eq:barx1barx1},
then we search for $\barx_{i+1}$  satisfying the $(i+1)\sstt$  condition of~\eqref{eq:barx1barx1}.
Recall that  $N(n_i, U)$ is the image of $N(n_i, E)$ by the natural map $E\to U$.
\begin{equation}
\renewcommand{\arraystretch}{1.4}%
\begin{array}{ll}
 \barx_1  \in N(n_{\tau(1)},  U),  \\
 \barx_2 \in N(n_{\tau(2)}, U),   \\
 \barx_3 :=  \barx_1 C_{13} + \barx_2 C_{23}+ \baru_3 A_3 \in N(n_{\tau(3)}, U), \;\;\textrm{where $\baru_3\in  \F_3^4$},  \\
 \barx_4 :=  \barx_1 C_{14} + \barx_2 C_{24}+ \baru_4 A_4 \in N(n_{\tau(4)}, U), \;\;\textrm{where $\baru_4\in  \F_3^4$},  \\
 \barx_5 :=  \barx_1 C_{15} + \barx_2 C_{25}+ \baru_5 A_5 \in N(n_{\tau(5)}, U), \;\;\textrm{where $\baru_5\in  \F_3^4$},  \\
 \barx_6 :=  \barx_1 C_{16} + \barx_2 C_{26}+ \baru_6 A_6 \in N(n_{\tau(6)}, U), \;\;\textrm{where $\baru_6\in  \F_3^4$},  \\
 \barx_7:=  \barx_1 C_{17} + \barx_2 C_{27}+  (\baru_3, \baru_4)C_{37}+ (\baru_5, \baru_6) C_{47} \in N(n_{\tau(7)}, U), \\
 \barx_8:= \barx_1 C_{18} + \barx_2 C_{28}+  (\baru_3, \baru_4) C_{38}+ (\baru_5, \baru_6) C_{48} \in N(n_{\tau(8)}, U).
\end{array}
\label{eq:barx1barx1}
\end{equation}
If we find $\barx=(\barx_1, \dots, \barx_8)$
satisfying
all conditions in~\eqref{eq:barx1barx1},
then $\barx$ belongs to $\barM(n^{\tau})$ and hence
\[
\barx^{\tau\inv}=(\barx_{\tau\inv(1)}, \dots, \barx_{\tau\inv(8)})
\] 
is an element of $\barM(n)$.
All elements of $\barM(n)$ are obtained in this way.
\par
Using the maps $N(a, E)\to N(a, U)$,
we can make from $\barM(n)$ the set $M(n)$ 
of  vectors  $x\in \Min(Q(\Delta, B))$ with $\nu(x)=n$.
Taking the union of these sets $M(n)$ 
together with the set of minimal-norm vectors of divisible type,
we obtain the set $\Min(Q(\Delta, B))$ of all minimal-norm vectors of $Q(\Delta, B)$.
\begin{remark}
Thanks to the permutation $\tau$, 
we have 
$|N(n_{\tau(i)}, U)|\le |N(n_{\tau(j)}, U)|$
for $i<j$, 
 and hence, in  the back track search above,
there exist few possibilities of $\barx_i$ for small indexes $i$.
By this trick, 
the enumeration of $\Min(Q(\Delta, B))$ becomes tractable.
\end{remark}
\begin{remark}
We know that $|\Min(L)|=2611200$ for an extremal lattice $L$ of rank $64$ 
by the theory of theta series and modular forms.
(See, for example, Chapter 7.7 of~\cite{CSbook}.)
Hence we can confirm easily that we left no minimal-norm vectors uncounted.
\end{remark}
\subsection{Isomorphism classes}\label{subsec:isomclasses}
In order to distinguish isomorphism classes of two extremal lattices 
$L$ and $L\sprime$ of rank $64$,
we use the \emph{distribution of intersection patterns of minimal-norm vectors}.
Let $\Min(L)$ be the set of vectors $v\in L$ with $\intfL{v, v}=6$.
For $v, v\sprime \in \Min(L)$, we have
$\intfL{v, v\sprime}\in \{0,\pm1, \pm 2,\pm 3, \pm 6\}$.
For $k=0,1,2,3,6$, we put
\[
a_k (v):=\frac{\,1\,}{2}\; |\,\set{v\sprime\in \Min(L)}{ \intfL{v, v\sprime}=k \;\textrm{or}\;-k }\,|.
\]
We have $a_6(v)=1$ and $\sum a_k(v)=1305600$.
The triple $a(v):=[a_1(v), a_2(v), a_3(v)]$ is called the \emph{intersection pattern} of $v$.
For a triple $a=[a_1, a_2, a_3]$ of non-negative integers with $a_1+a_2+a_3+1\le  1305600$,
we put
\[
\AAA_L(a):=\set{v\in \Min(L)}{a(v)=a},
\quad 
A_L(a):=\frac{\,1\,}{2}\; |\AAA_L(a) |,
\]
and call the function $A_L$ the \emph{distribution of intersection patterns}.
It is  obvious that, if $A_L\ne A_{L\sprime}$, then  
$L$ and $L\sprime$  are not isomorphic.
\begin{remark}
In fact, the calculation of intersection patterns $a(v)$ 
of all  elements $v$ of $\Min(Q(\Delta, B))/\{\pm 1\}$ takes 
most of the computation time
in the proof of Theorem~\ref{thm:main}.
\end{remark}
\subsection{Automorphism group}\label{subsec:aut}
Let $L$ be an extremal lattice of rank $64$, and 
let $\Gamma$ be a subgroup of $\OG(L)$.
 We will apply Proposition~\ref{prop:rig} below to 
$\Gamma=\{\pm 1\}$ or $\Gamma=\{\pm 1\}\times \gen{\tilde{\gamma}_Q}$,
where $\tilde{\gamma}_Q$ is the automorphism of $Q(\Delta_0, B(\gamma, v))$ introduced in Section~\ref{subsec:cyc8}.

\begin{definition}
An ordered list $(v_1, \dots, v_{64})$ of vectors in $\Min(L)$ is said to be a \emph{$\Gamma$-rigidifying basis}
if the following hold:
\begin{enumerate}[(a)]
\item The vectors $v_1, \dots, v_{64}$ form a basis of $L\tensor\Q$.
\item The group $\Gamma$ acts on the set $\AAA_L(a(v_1))$ transitively.
\item Suppose that $i>1$.
Then the set
\[
\set{v\sprime\in \AAA_L(a(v_i))}{\intfL{v\sprime, v_j}=\intfL{v_i, v_j}\;\;\textrm{for all}\;\; j<i\;}
\]
consists of a single element $v_i$.
\end{enumerate}
\end{definition}
\begin{proposition}\label{prop:rig}
If a $\Gamma$-rigidifying basis exists, then $\OG(L)$ coincides with $\Gamma$.
\end{proposition}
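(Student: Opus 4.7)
The plan is a standard rigidification argument: take an arbitrary $g\in\OG(L)$ and show, by modifying $g$ on the left (or right) by an element of $\Gamma$, that $g$ can be forced to act as the identity on the basis $(v_1,\dots,v_{64})$, hence on all of $L$.

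First I would observe that any $g\in\OG(L)$ preserves $\Min(L)$ and also the intersection pattern function $a(\cdot)$, since $a(v)$ is defined purely from the bilinear form $\intfL{\,,\,}$. In particular, $v_1^g\in\AAA_L(a(v_1))$. By condition~(b), $\Gamma$ acts transitively on $\AAA_L(a(v_1))$, so there exists $\gamma\in\Gamma$ with $v_1^\gamma=v_1^g$. Set $h:=g\gamma^{-1}\in\OG(L)$, so that $v_1^h=v_1$; it now suffices to show $h=1$, for then $g=\gamma\in\Gamma$.

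Next I would prove by induction on $i$ that $v_i^h=v_i$ for all $i=1,\dots,64$. The base case $i=1$ has just been arranged. For the inductive step, assume $v_j^h=v_j$ for all $j<i$. Since $h\in\OG(L)$, the vector $v_i^h$ lies in $\AAA_L(a(v_i))$; moreover, for each $j<i$,
\[
\intfL{v_i^h,v_j}=\intfL{v_i^h,v_j^h}=\intfL{v_i,v_j}.
\]
Condition~(c) then identifies $v_i^h$ uniquely as $v_i$. Once the induction is complete, $h$ fixes every vector in the basis $(v_1,\dots,v_{64})$; by condition~(a) this basis spans $L\otimes\Q$, so $h$ acts as the identity on $L\otimes\Q$, hence on $L$. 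Therefore $h=1$ and $g\in\Gamma$, proving $\OG(L)=\Gamma$.

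There is no real obstacle: the three conditions (a), (b), (c) were designed precisely to make this argument go through, with (b) handling the one place where transitivity is needed and (c) pinning down each subsequent basis vector uniquely from its inner products with the previously fixed ones. The only point requiring a moment of care is the observation that $\OG(L)$ preserves the distribution of intersection patterns, i.e.\ that $a(v^g)=a(v)$ for all $g\in\OG(L)$, which is immediate from the definition of $a$.
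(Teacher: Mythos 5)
Your proof is correct and follows essentially the same argument as the paper: use the invariance of the intersection pattern under $\OG(L)$, apply (b) to match $v_1$, induct with (c), and conclude with (a). The only cosmetic difference is that you normalize to $h=g\gamma^{-1}$ fixing each $v_i$, whereas the paper compares $v_i^g$ with $v_i^{g\sprime}$ directly; the content is identical.
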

\begin{proof}
Note that $\OG(L)$ preserves each subset set $\AAA_L(a)$ of $\Min(L)$ for any $a$.
In particular, we have $v^g\in \AAA_L(a(v))$  for any $g\in \OG(L)$ and any $v\in L$.
Let $g$ be an arbitrary element of $\OG(L)$.
By (b), there exists an element $g\sprime\in \Gamma$ 
such that $v_1^g=v_1^{g\sprime}$.
By (c), we can prove that $v_i^g=v_i^{g\sprime}$ holds for all $i=1, \dots, 64$ 
by induction on $i$.
Then (a) implies that $g=g\sprime$.
\end{proof}
\section{Examples}\label{sec:examples}
\subsection{Examples without non-trivial automorphisms}
 \begin{table}
{
\footnotesize
\[
\setlength{\arraycolsep}{4pt}
\left[\begin{array}{cccc|cccc|cccc|cccc|cccc|cccc} 
0 & 0 & 2 & 0 & 1 & 0 & 0 & 1 & 1 & 0 & 2 & 2 & 1 & 0 & 0 & 1 & 1 & 0 & 2 & 1 & 0 & 2 & 2 & 2 \\ 
1 & 1 & 0 & 2 & 2 & 0 & 0 & 1 & 0 & 1 & 0 & 1 & 1 & 0 & 0 & 2 & 0 & 1 & 1 & 0 & 2 & 0 & 1 & 2 \\ 
1 & 0 & 2 & 0 & 0 & 2 & 0 & 0 & 2 & 1 & 0 & 1 & 1 & 0 & 1 & 1 & 1 & 2 & 1 & 2 & 0 & 2 & 1 & 1 \\ 
2 & 1 & 0 & 0 & 1 & 1 & 1 & 1 & 1 & 0 & 1 & 0 & 2 & 1 & 0 & 0 & 1 & 2 & 0 & 0 & 0 & 2 & 2 & 0 \\ 
\hline
1 & 1 & 0 & 0 & 2 & 0 & 1 & 2 & 1 & 0 & 2 & 2 & 0 & 0 & 2 & 2 & 2 & 1 & 0 & 1 & 1 & 1 & 2 & 2 \\ 
2 & 0 & 1 & 0 & 2 & 0 & 0 & 1 & 1 & 0 & 0 & 2 & 0 & 2 & 0 & 0 & 1 & 1 & 1 & 0 & 1 & 0 & 1 & 2 \\ 
1 & 0 & 2 & 1 & 2 & 2 & 2 & 1 & 1 & 2 & 2 & 2 & 1 & 1 & 1 & 2 & 1 & 1 & 1 & 2 & 1 & 2 & 2 & 1 \\ 
2 & 2 & 1 & 1 & 2 & 0 & 2 & 2 & 1 & 0 & 2 & 1 & 1 & 0 & 1 & 1 & 1 & 1 & 2 & 1 & 0 & 0 & 2 & 0 
\end{array}\right] 
\]
}
\caption{Matrix $G\sprime_0$}\label{table:Gsprime0}
\end{table}
%
 %
 %
\begin{table}
{
\footnotesize
\[
\begin{array}{c|ccc|c} 
\textrm{no.} & a_1 & a_2 & a_3 & A_{Q^\typeI} (a) \\
\hline
1 & 568092 & 40191 & 612 & 1 \\ 
2 & 568290 & 40155 & 606 & 1 \\ 
3 & 568356 & 40143 & 604 & 3 \\ 
4 & 568488 & 40119 & 600 & 2 \\ 
5 & 568554 & 40107 & 598 & 2 \\ 
6 & 568620 & 40095 & 596 & 3 \\ 
7 & 568686 & 40083 & 594 & 2 \\ 
8 & 568752 & 40071 & 592 & 6 \\ 
9 & 568818 & 40059 & 590 & 6 \\ 
10 & 568884 & 40047 & 588 & 9 \\ 
&\dots &&&\\
110 & 579840 & 38055 & 256 & 5333 \\ 
111 & 579906 & 38043 & 254 & 6275 \\ 
112 & 579972 & 38031 & 252 & 7616 \\ 
113 & 580038 & 38019 & 250 & 8752 \\ 
114 & 580104 & 38007 & 248 & 10065 \\ 
115 & 580170 & 37995 & 246 & 11511 \\ 
116 & 580236 & 37983 & 244 & 13332 \\ 
117 & 580302 & 37971 & 242 & 15370 \\ 
118 & 580368 & 37959 & 240 & 17252 \\ 
119 & 580434 & 37947 & 238 & 19533 \\ 
120 & 580500 & 37935 & 236 & 22294 \\ 
&\dots &&&\\
170 & 583800 & 37335 & 136 & 17 \\ 
171 & 583866 & 37323 & 134 & 9 \\ 
172 & 583932 & 37311 & 132 & 6 \\ 
173 & 583998 & 37299 & 130 & 5 \\ 
174 & 584130 & 37275 & 126 & 3 \\ 
175 & 584196 & 37263 & 124 & 2 \\ 
176 & 584262 & 37251 & 122 & 1 \\
\hline
&&&\llap{total} & 1305600
\end{array}
%
\phantom{aaaaaa}
%
\begin{array}{c|ccc|c} 
\textrm{no.} & a_1 & a_2 & a_3 & A_{Q^{\typeII}} (a) \\
\hline
1 & 568422 & 40131 & 602 & 1 \\ 
2 & 568488 & 40119 & 600 & 2 \\ 
3 & 568554 & 40107 & 598 & 1 \\ 
4 & 568620 & 40095 & 596 & 3 \\ 
5 & 568686 & 40083 & 594 & 7 \\ 
6 & 568752 & 40071 & 592 & 5 \\ 
7 & 568818 & 40059 & 590 & 8 \\ 
8 & 568884 & 40047 & 588 & 8 \\ 
9 & 568950 & 40035 & 586 & 11 \\ 
10 & 569016 & 40023 & 584 & 11 \\ 
&\dots &&&\\
110 & 580104 & 38007 & 248 & 9761 \\ 
111 & 580170 & 37995 & 246 & 11289 \\ 
112 & 580236 & 37983 & 244 & 13121 \\ 
113 & 580302 & 37971 & 242 & 15330 \\ 
114 & 580368 & 37959 & 240 & 17148 \\ 
115 & 580434 & 37947 & 238 & 19598 \\ 
116 & 580500 & 37935 & 236 & 22119 \\ 
117 & 580566 & 37923 & 234 & 24532 \\ 
118 & 580632 & 37911 & 232 & 27067 \\ 
119 & 580698 & 37899 & 230 & 29774 \\ 
120 & 580764 & 37887 & 228 & 32471 \\ 
&\dots &&&\\
170 & 584064 & 37287 & 128 & 2 \\ 
171 & 584130 & 37275 & 126 & 1 \\ 
172 & 584196 & 37263 & 124 & 2 \\ 
173 & 584262 & 37251 & 122 & 1 \\ 
174 & 584328 & 37239 & 120 & 1 \\
&&&& \\
&&&&\\
\hline
&&& \llap{total}& 1305600
\end{array}
\]
}
\vskip .5cm
\caption{Distributions of intersection patterns of $Q^{\typeI}$ and $Q^{\typeII}$}\label{table:AQB0s}
\end{table}
Let $V_0$ be the maximal isotropic subspace of $U$ with basis $v_1, \dots, v_4$ 
in Table~\ref{table:basisvs}.
By this basis,
an element of $V_0$ is expressed by a vector in $\F_3^4$, and hence 
an element of $V_0^8$ is expressed by a vector in $\F_3^{32}$.
Let $G_0$ be the $8\times 32$  matrix with components in $\F_3$ 
of the form $[\,I_8 \,|\, G\sprime_0\,]$,
where $I_8$ is the identity matrix of size $8$ and $G\sprime_0$ is given in Table~\ref{table:Gsprime0}.
(This matrix  $G\sprime_0$ was produced by choosing  components pseudo-randomly.)
Let $B_0$ be the linear subspace of $V_0^8$ generated by the row vectors of $G_0$.
Then $B_0$ satisfies $\ptwo$-condition.
Recall that we have given maximal isotropic subspaces $W^{\typeI}$ and $W^{\typeII}$  in Table~\ref{table:basisvs}.
Let $Q^\typeI$ (resp.~$Q^\typeII$ ) be the Quebbemann lattice
$Q(\Delta\sp{\typeI}, B_0)$
(resp.~$Q(\Delta\sp{\typeII}, B_0)$),
where 
\[
\Delta\sp{\typeI}=((V_0, W^{\typeI}), \dots, (V_0, W^{\typeI})) \in \DDD^8,
\quad
\Delta\sp{\typeII}=((V_0, W^{\typeII}), \dots, (V_0, W^{\typeII})) \in \DDD^8.
\]
Then the distributions  of intersection patterns of $Q^\typeI$ and  $Q^\typeII$
are as in Table~\ref{table:AQB0s}.
(The left table is of $Q^\typeI$ and the right is of $Q^\typeII$.)
In these tables, 
the intersection patterns $a=[a_1, a_2, a_3]$ are sorted by the lexicographic order.
We can readily see that  
$Q^\typeI$ and $Q^\typeII$ are not isomorphic.
Both of $Q^\typeI$ and $Q^\typeII$
have $\{\pm 1\}$-rigidifying basis,
and hence $\OG(Q^\typeI)$ and $\OG(Q^\typeII)$
are equal to $\{\pm 1\}$.
 %
 %
\subsection{Examples with an automorphism of order $8$}
%
%
%
\begin{table}
{
\footnotesize
\[
 \setlength\arraycolsep{2.5pt} 
\gamma:=
\left[\begin{array}{cccccccc} 
2 & 1 & 2 & 4 & 3 & 2 & 2 & 1 \\ 
1 & 1 & 1 & 1 & 1 & 1 & 1 & 1 \\ 
-1 & -1 & -2 & -2 & -2 & -2 & -2 & -1 \\ 
-1 & 0 & 0 & -1 & 0 & 0 & 0 & 0 \\ 
0 & -1 & -1 & -1 & -1 & 0 & 0 & 0 \\ 
2 & 2 & 3 & 4 & 3 & 2 & 1 & 0 \\ 
-3 & -2 & -4 & -6 & -5 & -4 & -2 & -1 \\ 
2 & 1 & 3 & 4 & 3 & 2 & 1 & 1 
\end{array}\right],
\quad
\gamma\sprime:=
\left[\begin{array}{cccccccc} 
1 & 1 & 2 & 2 & 1 & 1 & 1 & 0 \\ 
-2 & -1 & -2 & -4 & -3 & -2 & -2 & -1 \\ 
3 & 2 & 4 & 6 & 5 & 3 & 2 & 1 \\ 
-2 & -2 & -4 & -5 & -4 & -3 & -2 & -1 \\ 
1 & 1 & 2 & 3 & 3 & 3 & 2 & 1 \\ 
-1 & -1 & -1 & -2 & -2 & -2 & -1 & 0 \\ 
-1 & 0 & -1 & -1 & -1 & -1 & -1 & -1 \\ 
2 & 1 & 2 & 3 & 2 & 2 & 1 & 1 
\end{array}\right] 
\]
}
\caption{Elements of $\OG(E)$ of order $8$}\label{table:order8}
\end{table}
\begin{table}
{
\footnotesize
\[
\begin{array}{c|ccc|c} 
\textrm{no.} & a_1 & a_2 & a_3 & A_{Q} (a) \\
\hline
1 & 568026 & 40203 & 614 & 8 \\ 
2 & 568092 & 40191 & 612 & 16 \\ 
3 & 568290 & 40155 & 606 & 24 \\ 
4 & 568356 & 40143 & 604 & 16 \\ 
5 & 568422 & 40131 & 602 & 16 \\ 
&\dots &&&\\
100 & 580104 & 38007 & 248 & 11240 \\ 
101 & 580170 & 37995 & 246 & 12984 \\ 
102 & 580236 & 37983 & 244 & 14840 \\ 
103 & 580302 & 37971 & 242 & 16712 \\ 
104 & 580368 & 37959 & 240 & 18800 \\ 
105 & 580434 & 37947 & 238 & 20808 \\ 
106 & 580500 & 37935 & 236 & 23184 \\ 
107 & 580566 & 37923 & 234 & 25304 \\ 
108 & 580632 & 37911 & 232 & 27416 \\ 
109 & 580698 & 37899 & 230 & 29720 \\ 
110 & 580764 & 37887 & 228 & 32472 \\ 
&\dots &&&\\
155 & 583734 & 37347 & 138 & 40 \\ 
156 & 583800 & 37335 & 136 & 24 \\ 
157 & 583866 & 37323 & 134 & 16 \\ 
158 & 583932 & 37311 & 132 & 8 \\
 & & &  &  \\ 
 & & &  &  \\ 
 & & &  &  \\ 
\hline
&&&\llap{total} & 1305600
\end{array}
%
%
%
\phantom{aaaaaa}
%
\begin{array}{c|ccc|c} 
\textrm{no.} & a_1 & a_2 & a_3 & A_{Q\sprime} (a) \\
\hline
1 & 568092 & 40191 & 612 & 8 \\ 
2 & 568224 & 40167 & 608 & 24 \\ 
3 & 568290 & 40155 & 606 & 8 \\ 
4 & 568488 & 40119 & 600 & 8 \\ 
5 & 568686 & 40083 & 594 & 16 \\ 
&\dots &&&\\
100 & 580104 & 38007 & 248 & 10688 \\ 
101 & 580170 & 37995 & 246 & 12344 \\ 
102 & 580236 & 37983 & 244 & 14656 \\ 
103 & 580302 & 37971 & 242 & 16064 \\ 
104 & 580368 & 37959 & 240 & 19240 \\ 
105 & 580434 & 37947 & 238 & 20104 \\ 
106 & 580500 & 37935 & 236 & 22984 \\ 
107 & 580566 & 37923 & 234 & 25128 \\ 
108 & 580632 & 37911 & 232 & 28064 \\ 
109 & 580698 & 37899 & 230 & 29128 \\ 
110 & 580764 & 37887 & 228 & 32304 \\ 
&\dots &&&\\
155 & 583734 & 37347 & 138 & 32 \\ 
156 & 583800 & 37335 & 136 & 48 \\ 
157 & 583866 & 37323 & 134 & 24 \\ 
158 & 583932 & 37311 & 132 & 24 \\ 
159 & 583998 & 37299 & 130 & 16 \\ 
160 & 584064 & 37287 & 128 & 8 \\ 
161 & 584196 & 37263 & 124 & 8 \\
\hline
&&&\llap{total} & 1305600
\end{array}
\]
}
\vskip .5cm
\caption{Distributions of intersection patterns of $Q$ and $Q\sprime$}\label{table:CycALs}
\end{table}

Let $\gamma$ be an element of $\OG(E)$ represented by 
the matrix in Table~\ref{table:order8}
with respect to the basis $e_1, \dots, e_8$ of $E$.
Then $\gamma$ is of order $8$ and belongs to the stabilizer subgroup $G^{\typeI}$ 
of $(V_0, W^{\typeI})\in \DDD$.  
Let $v=(v_1, \dots, v_8)\in V_0^8$ be 
\[
\left(
\begin{array}{c|c|c|c|c|c|c|c}
2 2 1 0& 0 1 2 0& 0 2 0 1& 1 0 0 1& 0 2 0 1& 0 2 2 2& 0 1 2 2& 0 1 2 2 
\end{array}
\right),
\]
where each component $v_i$ is written with respect to the basis  of $V_0$ in Table~\ref{table:basisvs}.
Then the subspace $B(\gamma, v)$ of $V_0^8$ satisfies $\ptwo$-condition,
and we obtain a Quebbemann lattice $Q:=Q(\Delta^{\typeI}, B(\gamma, v))$
with an automorphism $\tilde{\gamma}_Q$ of order $8$,
where $\Delta^{\typeI}\in \DDD^8$ is given in the previous subsection.
By the method of $\Gamma$-rigidifying basis,
we see that $\OG(Q)=\{\pm 1\}\times \gen{\tilde{\gamma}_Q}$.
The action of $\OG(Q)$ decomposes  $\Min(Q)$  into $2611200/16=163200$ orbits.
The  distribution  of intersection patterns is given in Table~\ref{table:CycALs} (left).
\par 
Let $\gamma\sprime$ be an element of $\OG(E)$ given in Table~\ref{table:order8},
which is of order $8$ and belongs to the stabilizer subgroup $G^{\typeII}$ 
of $(V_0, W^{\typeII})\in \DDD$.
Let $v\sprime\in V_0^8$ be 
\[
\left(
\begin{array}{c|c|c|c|c|c|c|c}
2220&0102&2120&2220&2202&1202&2220&2112
\end{array}
\right).
\]
Then $B(\gamma\sprime, v\sprime)$  satisfies $\ptwo$-condition,
and we obtain a Quebbemann lattice  
$Q\sprime:=Q(\Delta^{\typeII}, B(\gamma\sprime, v\sprime))$.
We see that $\OG(Q\sprime)=\{\pm 1\}\times \gen{\tilde{\gamma}_{Q\sprime}}$,
and its  action decomposes  $\Min(Q\sprime)$  into $163200$ orbits.
The  distribution  of intersection patterns is given in Table~\ref{table:CycALs} (right).
\erase{
\begin{remark}
During the construction of examples,
we sometimes encounter lattices $Q\spprime:=Q(B(\gamma, v))$
such that the size $2A_{Q\spprime}(a)$ of  the set $\AAA_{Q\spprime}(a)$ is divisible by $48$ for all intersection patterns $a$.
We guess that  $\OG(Q\spprime)$ contains a group of order $48$.
\end{remark}
\begin{remark}\label{rem:numbnormtype}
The numbers of minimal-norm vectors (modulo $\pm 1$) of each norm-type 
in the $2+2$ examples treated in this section 
are as in Table~\ref{table:numbnormtype}.
%
  
\begin{table}
\[
\renewcommand{\arraystretch}{1.4}
\begin{array}{c | r r |r r}
 & Q^{\typeI} & Q^{\typeII}  & Q & Q\sprime \\
 \hline
 0^5 6^3 & 60480 & 60480 & 60480 & 60480\\
 0^1 2^5 4^2 &  544320 & 544320 & 544320 & 544320 \\
 0^1 2^5 6^1 & 60534 &60588 & 67392 &  63504\\
 2^7 4^1 & 639306 & 639252 &632448 & 636336
 \end{array}
\]
\caption{Numbers of minimal-norm vectors of each norm-type}\label{table:numbnormtype}
\end{table}
\end{remark}
}
\bibliographystyle{plain}
\bibliography{myrefs}
%

\end{document}